\newtheorem{Thm}{Theorem}[section]
\newtheorem{Prop}[Thm]{Proposition}
\newtheorem{Cor}[Thm]{Corollary}
\newtheorem{Lem}[Thm]{Lemma}
\theoremstyle{definition}
\numberwithin{equation}{section}
\begin{document}

\newcommand{\Coim}{\mathrm{Coim}}
\newcommand{\Z}[0]{\mathbb{Z}}
\newcommand{\Q}[0]{\mathbb{Q}}
\newcommand{\F}[0]{\mathbb{F}}
\newcommand{\C}[0]{\mathbb{C}}
\newcommand{\N}[0]{\mathbb{N}}
\renewcommand{\O}[0]{\mathcal{O}}
\newcommand{\p}[0]{\mathfrak{p}}
\newcommand{\m}[0]{\mathrm{m}}
\newcommand{\Tr}{\mathrm{Tr}}
\newcommand{\Hom}[0]{\mathrm{Hom}}
\newcommand{\Gal}[0]{\mathrm{Gal}}
\newcommand{\Res}[0]{\mathrm{Res}}
\newcommand{\id}{\mathrm{id}}
\newcommand{\cl}{\mathrm{cl}}
\newcommand{\mult}{\mathrm{mult}}
\newcommand{\adm}{\mathrm{adm}}
\newcommand{\tr}{\mathrm{tr}}
\newcommand{\pr}{\mathrm{pr}}
\newcommand{\Ker}{\mathrm{Ker}}
\newcommand{\ab}{\mathrm{ab}}
\newcommand{\sep}{\mathrm{sep}}
\newcommand{\triv}{\mathrm{triv}}
\newcommand{\alg}{\mathrm{alg}}
\newcommand{\ur}{\mathrm{ur}}
\newcommand{\Coker}{\mathrm{Coker}}
\newcommand{\Aut}{\mathrm{Aut}}
\newcommand{\Ext}{\mathrm{Ext}}
\newcommand{\Iso}{\mathrm{Iso}}
\newcommand{\M}{\mathcal{M}}
\newcommand{\GL}{\mathrm{GL}}
\newcommand{\Fil}{\mathrm{Fil}}
\newcommand{\an}{\mathrm{an}}
\renewcommand{\c}{\mathcal }
\newcommand{\W}{\mathcal W}
\newcommand{\R}{\mathcal R}
\newcommand{\crys}{\mathrm{crys}}
\newcommand{\st}{\mathrm{st}}
\newcommand{\CM}{\mathrm{CM\Gamma }}
\newcommand{\CV}{\mathcal{C}\mathcal{V}}
\newcommand{\G}{\mathrm{G}}
\newcommand{\Map}{\mathrm{Map}}
\newcommand{\Sym}{\mathrm{Sym}}
\newcommand{\Spec}{\mathrm{Spec}}
\newcommand{\Gr}{\mathrm{Gr}}
\newcommand{\I}{\mathrm{Im}}
\newcommand{\Frac}{\mathrm{Frac}}
\newcommand{\LT}{\mathrm{LT}}
\newcommand{\Alg}{\mathrm{Alg}}
\newcommand{\MG}{\mathrm{M\Gamma }}
\newcommand{\To}{\longrightarrow}
\newcommand{\md}{\mathrm{mod}}
\newcommand{\MF}{\mathrm{MF}}
\newcommand{\CMF}{\mathcal{M}\mathcal{F}}
\newcommand{\Aug}{\mathrm{Aug}}
\renewcommand{\c}{\mathcal }
\newcommand{\uL}{\underline{\mathcal L}}
\newcommand{\Md}{\mathrm{Md}}
\newcommand{\wt}{\widetilde}
\newcommand{\op}{\mathrm}
\newcommand {\w}{\op{wt}}
\newcommand{\Ad}{\op{Ad}}
\newcommand{\ad}{\op{ad}}
\newcommand{\D}{\mathcal D}

\title{Ramification estimate for Fontaine-Laffaille Galois modules}
\author{Victor Abrashkin}
\address{Department of Mathematical Sciences, 
Durham University, Science Laboratories, 
South Rd, Durham DH1 3LE, United Kingdom \ \&\ Steklov 
Institute, Gubkina str. 8, 119991, Moscow, Russia
}
\email{victor.abrashkin@durham.ac.uk}
\date{}
\keywords{local field, Galois group, ramification filtration}
\subjclass[2010]{}

\begin{abstract} 
Suppose $K$ is unramified over $\Q _p$ and $\Gamma _K=\op{Gal}(\bar K/K)$. 
Let $H$ be a torsion $\Gamma _K$-equivariant subquotient of crystalline $\Q _p[\Gamma _K]$-module 
with HT weights from $[0,p-2]$. We give a new proof of Fontaine's conjecture about the 
triviality of action of some ramification subgroups $\Gamma _K^{(v)}$ 
on $H$. The earlier author's  
proof from [1] contains a gap and proves this conjecture only for some subgroups of 
index $p$ in $\Gamma _K^{(v)}$. 
\end{abstract}
\maketitle

\section*{Introduction} \label{S0} 

Let $W(k)$ be the ring of Witt vectors with coefficients in 
a perfect field $k$ of characteristic $p$. 
Consider the field $K=W(k)[1/p]$, choose its algebraic closure $\bar K$ and 
set $\Gamma _K=\op{Gal}(\bar K/K)$. Denote by $\C _p$ the completion of $\bar K$ and 
use the notation $O_{\C _p}$ for its valuation ring. 

For $a\in\Z _{\geqslant 0}$, let $\op{M\Gamma }^{cr}_{\Q _p}(a)$ be the category of crystalline 
$\Q _p[\Gamma _K]$-modules with Hodge-Tate weights from $[0,a]$. 
Define the full subcategory $\op{M\Gamma }_N^{cr}(a)$ of the category of 
$\Gamma _K$-modules consisting of 
$H=H_1/H_2$, where $H_1,H_2$ are $\Gamma _K$-invariant lattices 
in $V\in\op{M\Gamma }_{\Q _p}^{cr}(a)$ and $p^NH_1\subset H_2\subset H_1$. 
J.-M.\,Fontaine conjectured in \cite{Fo} that the ramification subgroups 
$\Gamma _K^{(v)}$ act on $H\in\op{M\Gamma }_N^{cr}(a)$ trivially if 
$v>N-1+a/(p-1)$. The author suggested in \cite{Ab1} a proof of this conjecture 
under the assumption $0\leqslant a\leqslant p-2$. 

It was pointed 
recently by Sh. Hattori to the author that the proof in \cite{Ab1} has a gap. 
More precisely, let $R=\varprojlim (O_{\C _p}/p)$ be Fontaine's ring. 
For  
$r=(o_n\,\op{mod}\,p)_{n\geqslant 0}\in R$ and $m\in\Z$, set 
$r^{(m)}=\underset{n\to\infty }\lim o_n^{p^{n+m}}\in O_{\C _p}$ and consider   
Fontaine's map $\gamma :W_N(R)\To O_{\C _p}/p^N$, where     
$(r_0,\dots ,r_{N-1})\mapsto\sum _{0\leqslant i<N}p^ir_i^{(i)}\,\op{mod}\,p^N$.  
Consider the projection \linebreak 
$(\bar o_0,\dots ,\bar o_N,\dots )\mapsto \bar o_N$ from 
$R$ to $O_{\C _p}/p$  
and denote the image of $\Ker\,\gamma $ in $W_N(O_{\C _p}/p)$ 
by $W_N^1(O_{\C _p}/p)$. 
This is principal ideal and in order to apply 
Fontaine's criterion about the triviality of the action of ramification subgroups 
from \cite{Fo}, we needed an element  
of $W_N(L)$, where $L$ is a finite extension of $K$ with ``small'' ramification, 
which generates $W^1_N(O_{\C _p}/p)$. Our ``truncation'' argument in \cite{Ab1} does 
not actually work: the resulting element does not belong to $W^1_N(O_{\C _p}/p)$. 
In the moment the author is inclined to believe that such 
an element does not exist if $N>1$. 
Nevertheless, our proof in \cite{Ab1} gives the Fontaine conjecture up to index $p$:  
the groups $\Gamma _K^{(v)}$ just should be 
replaced by the groups $\Gamma _K^{(v)}\cap\Gamma _{K(\zeta _{N+1})}$, 
where $\zeta _{N+1}$ is a primitive 
$p^{N+1}$-th root of unity. 

The above difficulty appears in many other situations when we try to escape from 
``$R$-constructions'' (e.g. $W(R)$, $A_{cr}$, etc) to $p$-adic 
constructions inside $\C _p$. In this paper we prove Fontaine's conjecture by 
applying methods from \cite{Ab2}. These methods were used earlier to study 
ramification properties in the characteristic $p$ case. As a matter of fact, this is the 
first time when we use them in the mixed characteristic situation. 
\medskip

\section{Construction of torsion crystalline representations} \label{S1}

The ring $R$ is perfect of characteristic $p$, it is 
provided with the valution $v_R$ such that $v_R(r):=
\underset{n\to\infty}\lim p^nv_p(o_n)$, where $r=(o_n\,\op{mod}\,p)_{n\geqslant 0}$.  
With respect to $v_R$, $R$ is complete and the field 
$R_0:=\op{Frac}R$ is algebraically closed. Note that $R$ and $R_0$ 
are provided with natural $\Gamma _K$-action. Denote by $\sigma $ 
the Frobenius endomorphism of  
$R$ and $W(R)$ and by $\m _R$ the maximal ideal of $R$.  

\subsection{} 
Let $\c G=\op{Spf}\,W(k)[[X]]$ be the Lubin-Tate 1-dimensional formal group 
over $W(k)$ such that $p\op{id}_{\c G}(X)=pX+X^p$. Then $\op{End}_{W(k)}\c G=\Z _p$ and for any 
$l\in\Z _p$, $(l\id _{\c G})(X)\equiv lX\,\op{mod}\, X^p$. 

Fix $N\in\N$. 

For $i\geqslant 0$, choose 
$o_i\in O_{\C _p}$ such that $o_0=0$, $o_1\ne 0$ and $p\op{id}_{\c G}(o_{i+1})=o_i$. 
Set $\tilde u=(o_{N+i}\,\op{mod}\,p)_{i\geqslant 0}\in R$. 
Then $\c K:=k((\tilde u))$ is a complete discrete valuation 
closed subfield in $R_0$. If $\c K_{sep}$ is the separable closure of $\c K$ 
in $R_0$ then $\c K_{sep}$ is separably closed and its completion 
coincides with $R_0$. The theory of the field-of-norms functor \cite{Wtb1} 
identifies $\Gamma _{\c K}$ with a closed subgroup in $\Gamma _K$. 
The quotient $\Gamma _K/\Gamma _{\c K}$ acts strictly on $\c K$. 
More precisely, there is a group epimorphism 
$\kappa :\Gamma _K\To\op{Aut}_{W(k)}\c G\simeq\Z _p^*$ such that 
if $g\in\Gamma _K$ then $\kappa (g)\in\Z _p[[X]]$ and 
$\kappa (g)(X)\equiv \chi (g)X\,\op{mod}\,X^p$ with $\chi (g)\in\Z _p^*$. 
(Actually, $g\mapsto\chi (g)$ is the cyclotomic character.) With this notation we have 
$g(\tilde u)=\kappa (g)(\tilde u)$.

Use the $p$-basis $\{\tilde u\}$ for separable extensions $\c E$ of $\c K$ in $\c K_{sep}$ 
to construct the system of lifts $O_N(\c E)$ of $\c E$ modulo $p^N$. 
Recall that 
$O_N(\c E)=W_N(\sigma ^{N-1}\c E)[u_N]\subset W_N(\c E)$ 
and $O_N(\c K)=W_N(k)((u_N))$, 
where $u_N$ is the Teichmuller representative of $\tilde u$ in $W_N(\c K)$. 
This construction essentially depends on a choice of $p$-basis in $\c K$. 
If, say, $\{u'\}$ is another $p$-basis for $\c K$ and $O'_N(\c E)$ are the appropriate lifts 
then $O_N(\c E)$ and $O'_N(\c E)$ are not very much different one from another: they can be related 
by the natural embeddings 
$\sigma ^{N-1}O_N(\c E)\subset W(\sigma ^{N-1}\c E)\subset O'_N(\c E)$.  
The lifts $O_N(\c E)$ are provided with the endomorphism $\sigma $ 
such that $\sigma u_N=u_N^p$, and $O_N(\c K_{sep})$ is provided with 
continuous $\Gamma _{\c K}$-action.  
 
If $\tau $ is a continuous automorphism of $\c E$ then generally $\tau $ can't be lifted 
to an automorphism of $O_N(\c E)$ (but it can always be lifted to $W_N(\c E)$).  
In many cases it is sufficient to 
use ``the lift'' 
$\hat\tau :\sigma ^{N-1}O_N(\c E)\To O_N(\c E)$ induced by 
$W_N(\tau ):W_N(\c E)\To W_N(\c E)$. In other words, $\hat\tau $ 
is defined only on a part of $O_N(\c E)$, but   
$\hat\tau\,\op{mod}\,p=\sigma ^{N-1}\circ\tau :\sigma ^{N-1}\c E\To\sigma ^{N-1}\c E$  
and, therefore, $\tau $ can be uniquely recovered from the ``lift'' $\hat\tau $. 

On the other hand, any continuous automorphism $\tau $ of $\c K=k((\tilde u))$ 
can be lifted to an automorphism $\tau ^{(N)}$ of $O_N(\c K)=W_N(k)((u_N))$ (use that  
$u_N\,\op{mod}\,p=\tilde u$). Taking into account the existence of a lift $\tau _{sep}$ 
of $\tau $ to $\c K_{sep}$ we obtain a lift $\tau ^{(N)}_{sep}$ of $\tau $ to 
$O_N(\c K_{sep})=W_N(\sigma ^{N-1}\c K_{sep})[u_N]$.

Set $O_N^0:=O_N(\c K_{sep})\cap W_N(O_{sep})$ and 
$O_N^+:=O_N(\c K_{sep})\cap W_N(\m _{sep})$, where 
$\m _{sep}$ is the maximal ideal of the valuation ring $O_{sep}$ of $\c K_{sep}$. 
Then $\sigma (O_N^0)\subset O_N^0$, 
$\sigma (O_N^+)\subset O_N^+$ and $\bigcap _{n\geqslant 0}\sigma ^n(O_N^+)=0$. Note that 
$O^0_N(\c K):=O^0_N\cap O_N(\c K)=W_N(k)[[u_N]]$, 
$O^+_N(\c K):=O^+_N\cap O_N(\c K)=u_NW_N(k)[[u_N]]$ and 
$O_N(\c K)=O_N^0(\c K)[u_N^{-1}]=W_N(k)((u_N))$.  

For $0\leqslant m\leqslant N$, introduce   
$$u_m=(p^{N-m}\id _{\c G})(u_N)\in O_N^0(\c K)$$ 
Then $u_0=\sigma u_1=pu_1+u_1^p$, $t=u_0/u_1=p+u_1^{p-1}\in O_N^0(\c K)$ 
and $u_0^{p-1}=t^p-pt^{p-1}$.  As a matter of fact, $u_0, u_1, t$ depend only on $\tilde u$. 
Indeed, if $u'\in W_N(R)$ and $u'\,\op{mod}\,pW_N(R)=\tilde u$ then in $O_N(\c K)$ we have 
$u_1=(p^{N-1}\id _{\c G})(u')$. 

\begin{Lem} \label{L1.1} 
 Suppose $g\in\Gamma _K$. Then 
\medskip 

{\rm a)} $g(u_0)\equiv \chi (g)u_0\,\op{mod}\,u_0^pO_N^0(\c K)$;
\medskip 

{\rm b)} $\sigma (g(t)/t)\equiv 1\,\op{mod}\,u_0^{p-1}O_N^0(\c K)$.
\end{Lem}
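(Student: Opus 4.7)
The plan is to use the natural lift of $g$ to $O_N(\c K)$ sending $u_N$ to $\kappa(g)(u_N)$, and to exploit the fact that $\kappa(g)\in\op{End}_{W(k)}\c G=\Z_p$ commutes with $(p\id_{\c G})^N$. The basic tool is a $p$-adic lifting estimate: if $x\equiv y\pmod{p^k}$ in $W(k)[[u_N]]$, then $p\id_{\c G}(x)-p\id_{\c G}(y)=p(x-y)+(x^p-y^p)\in p^{k+1}W(k)[[u_N]]$, since $x^p-y^p=\sum_{j\geqslant 1}\binom{p}{j}(x-y)^jy^{p-j}$ also gains an extra factor of $p$. Iterating, $(p\id_{\c G})^m(x)\equiv(p\id_{\c G})^m(y)\pmod{p^{k+m}}$.

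For (a), any lift satisfies $g(u_N)\equiv\kappa(g)(u_N)\pmod p$ since both reduce modulo $p$ to $g(\tilde u)=\kappa(g)(\tilde u)$. The lifting estimate with $k=1$, $m=N$ then gives $g(u_0)=(p\id_{\c G})^N(g(u_N))\equiv(p\id_{\c G})^N(\kappa(g)(u_N))\pmod{p^{N+1}}$, i.e.\ an equality in $O_N^0(\c K)$. Commutativity of $\kappa(g)$ and $(p\id_{\c G})^N$ rewrites the right side as $\kappa(g)(u_0)=\chi(g)u_0+u_0^ph(u_0)$, where $\kappa(g)(X)=\chi(g)X+X^ph(X)$ for some $h\in\Z_p[[X]]$. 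This is (a), and the same argument with $m=N-1$ gives the identity $g(u_1)=\kappa(g)(u_1)=\chi(g)u_1+u_1^ph(u_1)$ needed below.

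For (b), the lifting estimate also yields the key identity $\sigma u_1=u_0$ in $O_N^0(\c K)$: indeed $\sigma u_1=(p\id_{\c G})^{N-1}(u_N^p)$ while $u_0=(p\id_{\c G})^{N-1}(u_N^p+pu_N)$, and the two inputs agree modulo $p$. Hence $\sigma u_0=p\id_{\c G}(\sigma u_1)=p\id_{\c G}(u_0)=u_0(p+u_0^{p-1})$, so $(\sigma u_0)^{p-1}=u_0^{p-1}(p+u_0^{p-1})^{p-1}\in u_0^{p-1}O_N^0(\c K)$. Now $g(t)/t=(\chi(g)+u_0^{p-1}h(u_0))/(\chi(g)+u_1^{p-1}h(u_1))$, and applying $\sigma$ replaces $u_1^{p-1}$ by $u_0^{p-1}$ in the denominator and $u_0^{p-1}$ by $u_0^{p-1}(p+u_0^{p-1})^{p-1}$ in the numerator. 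Both numerator and denominator thus lie in $\chi(g)+u_0^{p-1}O_N^0(\c K)$; since $\chi(g)\in\Z_p^*$ is a unit in $O_N^0(\c K)$, their ratio lies in $1+u_0^{p-1}O_N^0(\c K)$, yielding (b).

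The main substantive step is the identity $\sigma u_1=u_0$; it is the only place where the $p$-adic lifting estimate is used nontrivially, and everything else is formal manipulation inside the formal group $\c G$.
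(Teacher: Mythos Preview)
Your proof is correct and follows essentially the same route as the paper: establish $g(u_1)=\kappa(g)(u_1)$ (hence $g(u_0)=\kappa(g)(u_0)$) via the $p$-adic lifting estimate, read off part (a) from $\kappa(g)(X)\equiv\chi(g)X\pmod{X^p}$, and for (b) compute $g(t)/t$ and apply $\sigma$ using $\sigma u_1=u_0$. The only difference is expository: the paper records the identities $\sigma u_1=u_0$ and ``$u_1$ depends only on $\tilde u$'' (equivalently, your lifting estimate) in the paragraph preceding the lemma and then invokes them, whereas you reprove them inside the argument.
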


\begin{proof} 
$g(u_1)=(p^{N-1}\id _{\c G})(g(u_N))=\kappa (g)(u_1)
\equiv \chi (g)u_1\,\op{mod}\,u_1^pO_N^0(\c K)$ 
implies a) because $\sigma (u_1)=u_0$. Then  
$g(t)/t\equiv 1\,\op{mod}\,u_1^{p-1}O_N^0(\c K)$ and applying 
$\sigma $ we obtain b). 
 \end{proof}

\subsection{} 

Let $\c M\c F$ be the category of $W(k)$-modules $M$ provided with decreasing filtration 
by $W(k)$-submodules $M=M^0\supset\dots\supset  M^{p-1}\supset M^p=0$ and 
$\sigma $-linear morphisms $\varphi _i:M^i\To M$ such that for all $i$, 
$\varphi _i|_{M^{i+1}}=p\varphi _{i+1}$.

For $0\leqslant a\leqslant p-2$, introduce the filtered module $\c S_a $ such that 
\medskip 

--- $\c S_a=O_N^0/u_0^aO_N^+$;
\medskip 

--- for $0\leqslant i\leqslant a$, 
$\op{Fil}^i\c S_a=t^i\c S_a$;
\medskip 

--- $\varphi _i:\op{Fil}^i\c S_a\To \c S_a$ is $\sigma $-linear morphism  
such that $\varphi _i(t^i)=1$. 
\medskip 

Clearly, $\c S_a\in\c M\c F$ (use that $\sigma t\equiv p\,\op{mod}\,u_0^{p-1}$). In addition, 
Lemma \ref{L1.1} implies also that the action of $\Gamma _K$ 
preserves the structure of an object of the category $\c M\c F$ on $\c S_a$.

For $0\leqslant a<p$, define the category of 
filtered Fontaine-Laffaille modules $\op{MF}_N(a)$ as the full subcategory 
in $\c M\c F$ consisting of modules $M$ of finite length over $W_N(k)$ 
such that 
$M^{a+1}=0$ and $\sum \op{Im}\varphi _i=M$. 
We can assume that $M$ is given together with a functorial 
splitting of its filtration, i.e. there are submodules $N_i$ in $M$ such that 
for all $i$, $M^i=N_i\oplus M^{i+1}$.

Let $M\in\op{MF}_N(a)$ and   
$\wt{U}_a(M)=\Hom _{\c M\c F}(M,\c S_a)$. Then the correspondence 
$M\mapsto \wt{U}_a(M)$ determines the functor $\wt{U}_a$ from $\op{MF}_N(a)$ to the category 
of $\Gamma _K$-modules.

\begin{Prop} \label{P1.2}
 If $0\leqslant a\leqslant p-2$ and $H\in\op{M\Gamma }_N^{cr}(a)$ then there is 
$M\in\op{MF}_N(a)$ such that $\wt{U}_a(M)=H$. 
\end{Prop}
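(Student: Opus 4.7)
The plan is to deduce Proposition \ref{P1.2} from the classical Fontaine--Laffaille theorem by identifying $\wt U_a$ with the standard Fontaine--Laffaille functor $U^{FL}_a$. Recall that, by Fontaine--Laffaille (and Fontaine--Messing), the functor $M \mapsto U^{FL}_a(M) = \Hom_{\CMF}\bigl(M,\, A_{\crys}/(p^N,\Fil^{a+1})\bigr)$ is exact and fully faithful on $\mathrm{MF}_N(a)$, and for $0 \leq a \leq p-2$ its essential image coincides with $\mathrm{M\Gamma}_N^{cr}(a)$. Applied to the given $H$, this yields an $M \in \mathrm{MF}_N(a)$ together with an isomorphism $U^{FL}_a(M) \simeq H$ of $\Gamma_K$-modules. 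The remaining task is to construct a natural isomorphism of functors $\wt U_a \simeq U^{FL}_a$.

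To compare the two, I would construct an $\CMF$-morphism $\iota : \c S_a \to A_{\crys}/(p^N, \Fil^{a+1})$ by composing the inclusion $O_N^0 \subset W_N(O_{sep}) \subset W_N(R)$ with the canonical surjection $W_N(R) \to A_{\crys}/p^N$ arising from Fontaine's map $\gamma$ and the divided-power structure on $\Ker(\gamma)$. Lemma \ref{L1.1}, together with the explicit description $t = p + u_1^{p-1}$, shows that $\iota$ is $\Gamma_K$-equivariant, compatible with $\sigma$, and carries $t$ to an element whose divided powers generate $\Fil^i A_{\crys}/p^N$ for $i \leq a$; this in turn makes $\iota$ compatible with the divided Frobenius $\varphi_i$. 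Pullback along $\iota$ defines a natural transformation $\iota^* : \wt U_a \to U^{FL}_a$, and it suffices to show that $\iota^*_M$ is bijective for every $M \in \mathrm{MF}_N(a)$.

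The main obstacle is precisely this bijectivity. Injectivity should follow from faithfulness of $\iota$ on the image of $\c S_a$ in the truncated crystalline ring. Surjectivity is the substantive point: given $f \in U^{FL}_a(M)$, one must show that $f$ factors through $\c S_a$. I expect this to go by dévissage on $M$ using the functorial splitting $M^i = N_i \oplus M^{i+1}$ and the generation condition $\sum \I\, \varphi_i = M$, together with a descending induction on $i$ from $a$: if the image under $f$ of every $\varphi_i(m)$ with $m \in M^i$ lies in $\c S_a$, then so does the image of $m$ itself, modulo higher filtration steps. The hypothesis $a \leq p-2$ enters at exactly this point, controlling the denominators that can appear in the divided powers of $t$ and ensuring that the integral Witt-vector structure $W_N(O_{sep})$ is preserved under the $\varphi_i$-descent. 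Should the direct Hom-level comparison prove too delicate, a backup strategy is to establish independently that $\wt U_a$ is exact and fully faithful on $\mathrm{MF}_N(a)$ and then compare $\wt U_a$ with $U^{FL}_a$ on a class of test objects (e.g.\ the cyclotomic-type modules built from $\c S_a$ itself).
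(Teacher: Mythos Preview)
Your overall strategy coincides with the paper's: invoke Fontaine--Laffaille to produce $M\in\op{MF}_N(a)$ with $U_a(M)=H$, then compare $U_a$ with $\wt U_a$ via a morphism of filtered modules induced by $O_N^0\hookrightarrow W_N(R)$. Two points of execution, however, are handled differently, and your sketch leaves both of them open.

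First, the paper does \emph{not} map $\c S_a$ into $A_{cr,N}/\Fil^{a+1}$. It is not clear that $u_0^aO_N^+$ lands in $\Fil^{a+1}A_{cr,N}$, so your $\iota$ may not be well-defined on the quotient $\c S_a$. Instead the paper introduces the intermediate object $\c W_N^a=W_N(R)/u_0^aW_N(\m_R)$ and proves that $A_{cr,N}/T_a=\c W_N^a$, where $T_a$ is the largest ideal on which $\varphi_a$ is nilpotent. Since any $\c M\c F$-morphism from $M$ automatically kills $T_a$, one gets $U_a(M)=\Hom_{\c M\c F}(M,\c W_N^a)$ for free, and the inclusion $O_N^0\subset W_N(R)$ plainly induces $\iota_a:\c S_a\to\c W_N^a$. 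This nilpotence argument is what replaces your appeal to ``$a\le p-2$ controlling denominators in divided powers''; the divided powers are simply discarded because they lie in $T_a$.

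Second, your surjectivity step (``if $f(\varphi_i(m))\in\c S_a$ then $f(m)\in\c S_a$ modulo higher filtration'') is circular as stated: you need $f(m)$ to lie in the right place \emph{before} you can apply $\varphi_i$. The paper's mechanism is concrete. Writing the structure of $M$ via a matrix $C\in\GL(W(k))$ and setting $\bar u=(\xi(\bar n_a),\dots,\xi(\bar n_0))$ for $\xi\in\Hom_{\c M\c F}(M,\c W_N^a)$, one obtains an equation $\sigma(\bar u)D=\bar u$ with $D$ invertible over $O_N(\c K)$. A priori $\bar u$ has coordinates in $W_N(\c K_{sep})$; but then $\sigma^{N-1}\bar u$ has coordinates in $\sigma^{N-1}W_N(\c K_{sep})\subset O_N(\c K_{sep})$, and the relation $\bar u=\sigma(\bar u)D$ bootstraps this down to $\sigma^{N-2}\bar u,\dots,\bar u$. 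Finally $O_N^0=O_N(\c K_{sep})\cap W_N(O_{sep})$ gives the desired factorisation through $\c S_a$. This $\sigma$-iteration, exploiting the specific construction of the lifts $O_N$, is the missing idea in your d\'evissage.
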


\begin{proof} Recall briefly the main ingredients of the 
Fontaine-Laffaille theory \cite{FL}.  
The $p^N$-torsion crystalline ring 
$A_{cr.N}:=A_{cr}/p^N$ appears as  
the divided power envelope 
of $W_N(R)$ with respect to $\Ker\,\gamma $.  
We need the following construction of a generator of $\Ker\,\gamma $. 
(Note that we have a natural inclusion of $W(k)$-modules 
$O_N^0\subset W_N(R)$.) 

\begin{Lem} \label{L1.3}
 $\Ker\,\gamma =tW_N(R)$. 
\end{Lem}

\begin{proof} We have $\gamma (u_N)\equiv 
o_N\,\op{mod}\,pO_{\C _p}$, therefore, 
$\gamma (u_0)\equiv 0\,\op{mod}\,p^{N}O_{\C _p}$ and 
$t\in\Ker\gamma $. 
On the other hand, $t\equiv u_1^{p-1}\equiv [r]\,\op{mod}\,pW(R)$, where 
$r\in R$ is such that $r^{(0)}\equiv o_1^{p-1}\equiv -p\,\op{mod}\,p^{p/(p-1)}O_{\C _p}$. 
Therefore, $v_p(r^{(0)})=1$ and $t$  generates $\Ker\gamma $, cf. \cite{FL}. 
\end{proof}

By above Lemma, 
$A_{cr,N}=W_N(R)[\{\gamma _i(t)\ |\ i\geqslant 1\}]$, where 
$\gamma _i(t)$ are the $i$-th divided powers of $t$. 
Then  the identity $\gamma _p(t)=t^{p-1}+u_0^{p-1}/p$ implies that 
$A_{cr, N}=W_N(R)[\{\gamma _i(u_0^{p-1}/p)\ |\ i\geqslant 1\}]$. 

Recall that $A_{cr,N}\in\c M\c F$ with:
\medskip  

--- the filtration $\op{Fil}^iA_{cr,N}$, $0\leqslant i<p$, 
 generated as ideal 
by $t^i$ and all $\gamma _j(u_0^{p-1}/p)$, $j\geqslant 1$;
\medskip 

--- the $\sigma $-linear morphisms $\varphi _i:\op{Fil}^iA_{cr,N}\To A_{cr,N}$ 
(which come from $\sigma /p^i$ on $A_{cr}$) such that $\varphi _i(t^i)=(1+u_0^{p-1}/p)^i$ and 
$\varphi _i(u_0^{p-1}/p)=p^{p-1-i}(u_0^{p-1}/p)(1+u_0^{p-1}/p)^{p-1}$. 
\medskip 

Then the Fontaine-Laffaille functor $U_a$ attaches to $M\in\op{MF}_N(a)$ the 
$\Gamma _K$-module $\Hom _{\c M\c F}(M,A_{cr,N})$. 
This functor is fully-faithful (we assume that $a\leqslant p-2$) and, therefore, 
there is $M\in\op{MF}_N(a)$ such that $U_a(M)=H$.

Consider the $W(k)$-module $\c W^a_{N}=W_N(R)/u_0^aW_N(\m _R)$ with 
the filtration induced by the filtration $W^i_N(R)=t^iW_N(R)$ and 
$\sigma $-linear morphisms $\varphi _i$ such that $\varphi _i(t^i)=1$. 
 Prove that we have an identification of 
$\Gamma _K$-modules  $H=\Hom _{\c M\c F}(M,\c W_N^a)$. 

Indeed, let 
$T_a$ be the maximal element  in the family of 
all ideals $I$ of $A_{cr,N}$ such that $\varphi _a$ induces a 
nilpotent endomorphism of $I$. Then 
for any $M\in\op{MF}_N(a)$, 
$U_a(M)=\Hom _{\c M\c F}(M,A_{cr,N}/T_a)$. 
By straightforward calculations we can see that 
$T_a$ is generated by the elements of 
$u_0^aW_N(\m _R)$ and all $\gamma _j(u_0^{p-1}/p)$, $j\geqslant 1$. 
It remains to note that we have a natural identification  
$A_{cr,N}/T_a=\c W^a_{N}$ in the category $\c M\c F$. 

Consider the natural embedding $O_N^0\To W_N(R)$ and the induced natural 
map $\iota _a:  
\c S_a\To \c W_N^a$ in $\c M\c F$. Prove that $\iota _{a*}:  
\wt{U}_a(M)\to H$ is isomorphism of $\Gamma _K$-modules.   

Choose 
$W(k)$-submodules $N_i$ in $M^i$ such that $M^i=N_i\oplus M^{i+1}$ and choose 
vectors $\bar n_i$ whose coordinates give a minimal system of 
generators of $N_i$. Then the structure of $M$ can be given by 
the matrix relation 
$(\varphi _a(\bar n_a), \dots ,\varphi _0(\bar n_0))=
(\bar n_a,\dots ,\bar n_0)C$, 
where $C$ is an invertible matrix with coefficients in $W(k)$. The elements of 
$H$ are identified with the residues $(\bar u_a,\dots ,\bar u_0)
\,\op{mod}\,u_0^aW_N(\m _{\sep})$ 
where the vectors $(\bar u_a,\dots ,\bar u_0)$ have coefficients in $W_N(\c K_{sep})$ and 
satisfy the following system of equations  (use that $\varphi _a$ is topologically 
nilpotent on $u_N^aW_N(\m _{sep})$)
$$\left (\frac{\sigma \bar u_a}{\sigma t^a}, \dots , 
\frac{\sigma\bar u_i}{\sigma t^i}, \dots ,\sigma (\bar u_0)\right )=
(\bar u_a, \dots ,\bar u_0)C\, 
$$

In particular, if $\bar u=(\bar u_a, \dots , \bar u_0)$ then there is an invertible 
matrix $D$ with coefficients in $O_N(\c K)$ such that 
\begin{equation}\label{E1.1} 
\sigma (\bar u)D=\bar u\,. 
\end{equation}
 We know that all coordinates 
of $\sigma ^{N-1}\bar u$ belong to $\sigma ^{N-1}W_N(\c K_{sep})
\subset O_N(\c K_{sep})$. Then \eqref{E1.1} implies step-by-step that the vectors 
$\sigma ^{N-2}\bar u, \dots ,\bar u$  
have coordinates in $O_N(\c K_{sep})$.  It remains to note that 
$O_N^0=O_N(\c K_{sep})\cap W_N(O_{sep})$ and $O_N^+=O_N(\c K_{sep})\cap W_N(\m _{sep})$. 
The proposition is proved. 
\end{proof}

\section{Reformulation of the Fontaine conjecture} \label{S2}

\subsection{Review of ramification theory} \label{S2.1} 
Let $\c I_{\c K}$ be the group of all continuous automorphisms 
of $\c K_{sep}$ which keep invariant the residue field of $\c K_{sep}$ 
and preserve the extension of the normalised valuation $v_{\c K}$ 
of $\c K$ 
to $\c K_{sep}$. This group has 
a decreasing filtration by its ramification subgroups $\c I^{(v)}_{\c K}$ in 
upper numbering $v\geqslant 0$. Recall basic ingredients of the definition of 
this filtration following the papers \cite{De, Wtb1, Wtb2}. 

For any field extension $\c E$ of $\c K$ in $\c K_{sep}$, set    
$\c E_{sep}=\c K_{sep}$, in particular, $\c I_{\c E}=\c I_{\c K}$. All  
elements of $\c I_{\c K}$ preserve the extension $v_{\c E}$ of 
the normalised valuation on $\c E$ to $\c K_{sep}$.

For $x\geqslant 0$, set 
$\c I_{\c E,x}=\{\iota\in\c I_{\c E}\ |\ v_{\c E}(\iota (a)-a)
\geqslant 1+x\ \ \forall a\in\m _{\c E}\}$, where 
$\m _{\c E}$ is the maximal ideal in $O_{\c E}$.  

Denote by $\c I_{\c E/\c K}$ the set of all continuous embeddings of $\c E$ into 
$\c K_{sep}$ which induce the identity map on $\c K$ and the residue field 
$k_{\c E}$ of $\c E$. For $x\geqslant 0$, set 
$\c I_{\c E/\c K,x}=\c I_{\c E,x}\bigcap\c I_{\c E/\c K}\,$.  

If $\iota _1,\iota _2\in \c I_{\c E/\c K}$ and $x\geqslant 0$ 
then $\iota _1$ and $\iota _2$ are {\it $x$-equivalent} iff 
for any $a\in \m_\c E$,  $v_{\c E}(\iota _1(a)-\iota _2(a))\geqslant 1+x$. 
Denote by $(\c I_{\c E/\c K}:\c I_{\c E/\c K,x})$ 
the number of 
$x$-equivalent classes in $\c I_{\c E/\c K}$.  
Then the Herbrand function $\varphi _{\c E/\c K}$ 
can be defined for all $x\geqslant 0$, 
as 
$$\varphi _{\c E/\c K}(x)=\int _0^x(\c I_{\c E/\c K}:\c I_{\c E/\c K,x})^{-1}dx\,.$$ 
This function has the following properties:
\medskip 

$\bullet $\ $\varphi _{\c E/\c K}$ is a piece-wise linear function   
with finitely many edges;
\medskip 

$\bullet $\ if $\c K\subset \c E\subset \c H$ is a tower of finite field extensions 
in $\c K_{sep}$ then 
for any $x\geqslant 0$, 
$\varphi _{\c H/\c K}(x)=\varphi _{\c E/\c K}(\varphi _{\c H/\c E}(x))$. 
\medskip 

The ramification filtration 
$\{\c I_{\c K}^{(v)}\}_{v\geqslant 0}$ appears now as a decreasing sequence of   
the subgroups $\c I^{(v)}_{\c K}$ of $\c I_{\c K}$, where $\c I_{\c K}^{(v)}$ consists    
 of $\iota\in\c I_{\c K}$ such that for any finite extension 
$\c E$ of $\c K$, $\iota\in\c I_{\c E,v_{\c E}}$ with $\varphi _{\c E/\c K}(v_{\c E})=v$. 

If we replace the lower indices $\c K$ to $\c E$, the ramification 
filtration $\{\c I_{\c K}^{(v)}\}_{v\geqslant 0}$ 
is not changed as a whole, just only individual subgroups change their upper indices, 
that is $\c I_{\c K}^{(v)}=\c I_{\c E}^{(v_{\c E})}$.
 
Note 
that the inertia subgroup 
$\Gamma _{\c E}^0$ of $\Gamma _{\c E}=\op{Gal} (\c K_{sep}/{\c E})$ is a subgroup in $\c I_{\c E}$ 
and for any $v\geqslant 0$, 
the appropriate subgroup $\Gamma _{\c E}^{(v)}=\Gamma _{\c E}\cap\c I_{\c E}^{(v)}$ 
is just the ramification subgroup of $\Gamma _{\c E}$ with the upper number $v$ 
from \cite{Se1}.

\subsection{Statement of the main theorem} \label{S2.2} 

The main idea of our approach to the $\Gamma _K$-modules $\wt{U}_a(M)$ 
is related to the following fact. The filtered module $\c S_a$ 
depends only on the field $\c K$ and its uniformizer $\tilde u$. Therefore, 
$\c S_a$ can be identified with its analogue $\c S'_a$ constructed 
for any ramified extension $\c K'$ of $\c K$ together with its uniformizer 
$\tilde u'$. The whole 
group $\c I_{\c K}$ does not preserve the structure of $\c S_a$ 
but the ramification subgroups $\c I_{\c K}^{(v)}$, where $a>a^*_N:=(a+1)p^{N-1}-1$ 
do preserve this structure because of the following proposition. 

\begin{Prop} \label{P2.1} 
 If $v>a^*_N$ and $M\in\op{MF}_N(a)$ then a natural action 
of $\c I_{\c K}$ on $W_N(\c K_{sep})$ induces 
the $\c I_{\c K}^{(v)}$-module structure on $\wt{U}_a(M)$.
\end{Prop}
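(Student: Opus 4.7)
The plan is to identify $\wt U_a(M)$ with a concrete subset of Witt vectors and then verify that every $\iota \in \c I_{\c K}^{(v)}$ with $v > a^*_N$ preserves this subset together with all of its structure. By the proof of Proposition \ref{P1.2}, an element of $\wt U_a(M)$ corresponds to a tuple $\bar u = (\bar u_a,\dots,\bar u_0)$ with entries in $\c S_a$ satisfying the relation \eqref{E1.1}, $\sigma(\bar u)\,D = \bar u$, where $D$ has entries in $O_N(\c K) = W_N(k)((u_N))$. Via the identification $\wt U_a(M) = H$, one may regard $\bar u$ inside $W_N(O_{sep})$, whose quotient $W_N(O_{sep})/u_0^aW_N(\m_{sep})$ serves as the natural ambient object.

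The group $\c I_{\c K}$ acts functorially on $W_N(O_{sep})$ commuting with Frobenius $\sigma$. Applying $\iota$ to \eqref{E1.1} gives $\sigma(\iota(\bar u))\,\iota(D) = \iota(\bar u)$, so if $\iota$ preserves the ideal $u_0^aW_N(\m_{sep})$ and satisfies $\iota(D) \equiv D$ modulo this ideal, then $\iota(\bar u)$ again satisfies a defining equation for $\wt U_a(M)$. Both requirements reduce to the single congruence $\iota(u_N) \equiv u_N \pmod{u_0^aW_N(\m_{sep})}$, since $u_0$, $t$ and the entries of $D$ are all built from $u_N$ by operations inside $O_N(\c K)$.

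To establish this congruence, I would take $\c E = \c K$ in the definition of $\c I_{\c K}^{(v)}$, obtaining $\iota \in \c I_{\c K,v}$ and hence $v_{\c K}(\iota(\tilde u) - \tilde u) \geq 1 + v > (a+1)p^{N-1}$. The threshold $a^*_N = (a+1)p^{N-1}-1$ is calibrated so that this estimate, transported via the Teichmüller lift $\tilde u \mapsto u_N = [\tilde u]$ to the difference $[\iota(\tilde u)] - [\tilde u] \in W_N(\c K_{sep})$, forces membership in $u_0^aW_N(\m_{sep})$. This Witt-vector lift-and-compare step is the main obstacle: one must propagate the single high-valuation coordinate of the Teichmüller difference through all Witt components, exploiting the explicit forms $u_0 \equiv u_N^{p^N} \pmod p$, the factorization $u_0 = u_1 \cdot t$, and the generator description of $\Ker\,\gamma$ provided by Lemma \ref{L1.3}.

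Granted the congruence, the remainder is bookkeeping: $\iota$ preserves $\c S_a = O_N^0/u_0^aO_N^+$ inside the ambient quotient, sends $t$ to itself modulo the ideal (so preserves the filtration $\op{Fil}^i\c S_a = t^i\c S_a$), and commutes with each $\varphi_i$ since $\iota$ commutes with Frobenius on Witt vectors. Hence $\iota$ induces an endomorphism of $\c S_a$ in the category $\c M\c F$, and passing to $\Hom_{\c M\c F}(M,\c S_a) = \wt U_a(M)$ yields the desired $\c I_{\c K}^{(v)}$-action.
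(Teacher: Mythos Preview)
Your overall strategy is sound and parallels the paper's: both reduce the proposition to showing that $\tau\in\c I_{\c K}^{(v)}$ preserves the filtered structure of $\c S_a$, which amounts to controlling $\tau(u_0)$ and $\tau(t)$. However, the specific congruence you target,
\[
\iota(u_N)\equiv u_N\pmod{u_0^{a}W_N(\m_{sep})},
\]
is too strong and does \emph{not} follow from $v>a^*_N$. Since $u_0\equiv \tilde u^{\,p^N}\bmod p$, membership in $u_0^{a}W_N(\m_{sep})$ requires the first Witt coordinate to have $v_{\c K}$-valuation $>ap^N$, whereas the hypothesis only gives $v_{\c K}(\iota(\tilde u)-\tilde u)>(a+1)p^{N-1}$; for $a\geqslant 1$ and $N\geqslant 2$ the former bound is strictly larger. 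There is a second, related difficulty you gloss over: the Teichm\"uller difference $[\iota(\tilde u)]-[\tilde u]$ lies in $W_N(\c K_{sep})$ but generally not in $O_N(\c K_{sep})$, so one cannot hope for a clean congruence at the level of $u_N$ inside the lift.

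The paper's Lemma~\ref{L2.2} circumvents both problems by the Lubin--Tate trick you are missing. One writes $\tau(u_N)=u_N+\eta_N+pw$ with $\eta_N\in u_1^{\,a+1}O_N^+$ (this is the correct threshold, since $u_1\equiv\tilde u^{\,p^{N-1}}\bmod p$) and an uncontrolled term $pw\in pW_N(\c K_{sep})$. Then one applies the isogeny $p^{N-1}\id_{\c G}$ to pass from $u_N$ to $u_1=(p^{N-1}\id_{\c G})(u_N)$: each application of $p\,\id_{\c G}$ raises the $p$-adic order of the uncontrolled term by one, so after $N-1$ steps it becomes $p^Nw_1=0$ in $W_N$, yielding the clean congruence $\tau(u_1)\equiv u_1\pmod{u_1^{\,a+1}O_N^+}$. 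Applying $\sigma$ gives $\tau(u_0)\equiv u_0\pmod{u_0^{\,a+1}O_N^+}$ and $\sigma(\tau t/t)\equiv 1\pmod{u_0^{a}O_N^+}$, which are exactly what is needed for the filtration and the $\varphi_i$. (Lemma~\ref{L1.3} is about the map $\gamma$ to $O_{\C_p}/p^N$ and plays no role here.)
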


\begin{proof} All we need is just the following lemma. 
 \end{proof}

\begin{Lem} \label{L2.2} 
 If $\tau\in\c I_{\c K}^{(v)}$ with $v>a^*_N$ then 
\medskip 

{\rm a)} $\tau (u_0)/u_0\in O^*_N(\c K_{sep})$; 
\medskip 

{\rm b)} for $0\leqslant i\leqslant a$, 
$\varphi _i(\tau t^i)=1$.
\end{Lem}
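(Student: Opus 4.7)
The idea is to translate the ramification condition $v>a_N^*$ into a sharp perturbation estimate for $\tau ^{(N)}_{sep}(u_N)-u_N$, then propagate this estimate through the Lubin--Tate identities $u_0=(p^N\id _{\c G})(u_N)$ and $t=p+u_1^{p-1}$. Taking $\c E=\c K$ (so $\varphi _{\c K/\c K}=\id $) in the definition of the ramification filtration, $\tau \in\c I_{\c K,v}$ gives $v_{\c K}(\tau (\tilde u)-\tilde u)\geqslant 1+v>(a+1)p^{N-1}$. Setting $\delta :=\tau ^{(N)}_{sep}(u_N)-u_N$, its reduction modulo $p$ equals $\tau (\tilde u)-\tilde u$, so $v_{\c K}(\delta \bmod p)\geqslant 1+v$.

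For part {\rm a)}, since $(p^N\id _{\c G})(X)\equiv X^{p^N}\pmod p$ and Frobenius is additive in characteristic $p$,
$$\tau (u_0)-u_0\equiv (u_N+\delta )^{p^N}-u_N^{p^N}\equiv \delta ^{p^N}\pmod p,$$
so $v_{\c K}((\tau (u_0)-u_0)\bmod p)\geqslant p^N(1+v)$, strictly exceeding $v_{\c K}(u_0\bmod p)=p^N$. Hence $\tau (u_0)/u_0\bmod p=(\tau (\tilde u)/\tilde u)^{p^N}$ is a unit in $O_{sep}$. To promote this to $\tau (u_0)/u_0\in O_N^*(\c K_{sep})$, we use the formal-group factorization $\tau (u_0)=u_0+_{\c G}(p^N\id _{\c G})(z)$ with $z:=\tau (u_N)-_{\c G}u_N$ a unit multiple of $\delta $, which yields the divisibility $u_0\mid \tau (u_0)$ in $O_N^0(\c K_{sep})$; the unit property is then detected on the $0$th Witt component.

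For part {\rm b)}, a parallel computation with $\delta _1:=\tau ^{(N)}_{sep}(u_1)-u_1\equiv \delta ^{p^{N-1}}\pmod p$ and $\tau (t)-t=(u_1+\delta _1)^{p-1}-u_1^{p-1}$ gives $v_{\c K}((\tau (t)/t-1)\bmod p)\geqslant p^{N-1}v$. Applying $\sigma $ multiplies this valuation by $p$ modulo $p$, and raising to the $i$-th power preserves it since $\gcd (i,p)=1$ for $1\leqslant i\leqslant a<p$, so
$$v_{\c K}\bigl((\sigma (\tau t/t)^i-1)\bmod p\bigr)\geqslant p^Nv>p^Na,$$
using $v>a_N^*\geqslant a$. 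Thus $\sigma (\tau t/t)^i-1$ lies in $u_0^aO_N^+$ modulo $p$; applying $\sigma ^{N-1}$ to enter $W_N(O_{sep})$, where Witt-component arithmetic is clean, a straightforward induction on Witt coordinates lifts this to the analogous congruence in $O_N^0(\c K_{sep})$. Since $\varphi _i$ is $\sigma $-linear with $\varphi _i(t^i)=1$, we conclude $\varphi _i(\tau t^i)=\varphi _i(t^i\cdot (\tau t/t)^i)=\sigma ((\tau t/t)^i)\cdot \varphi _i(t^i)=1$ in $\c S_a$.

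The chief obstacle is the passage from mod-$p$ valuation estimates to congruences valid on every Witt component. The threshold $v>(a+1)p^{N-1}-1$ is calibrated precisely so that after the Frobenius twist $\sigma ^{N-1}$ (landing in $W_N(O_{sep})$) the valuation gain is uniform across all $N$ Witt coordinates, accounting for the extra factor $p^{N-1}$ beyond the naive mod-$p$ threshold $v>a$.
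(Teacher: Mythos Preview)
Your argument controls the perturbation $\tau (u_N)-u_N$ only modulo $p$, and then asserts that this lifts to the required congruences in $O_N^0(\c K_{sep})$ by ``a straightforward induction on Witt coordinates''. This is the genuine gap: no such induction exists without further input. Knowing that $\sigma (\tau t/t)^i-1$ has large $v_{\c K}$-valuation modulo $p$ says nothing about its higher Witt components, and applying $\sigma ^{N-1}$ does not help---it simply raises the first component to a $p$-power and leaves the others unconstrained. Likewise in part {\rm a)}, the formal-group identity $\tau (u_0)=u_0+_{\c G}(p^N\id _{\c G})(z)$ is correct, but you have no control over $z=\tau (u_N)-_{\c G}u_N$ beyond its mod-$p$ reduction, so the claimed divisibility $u_0\mid \tau (u_0)$ in $O_N^0$ does not follow. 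The issue is exactly that $1/u_0\notin W_N(O_{sep})$, so a unit modulo $p$ need not be a unit in $O_N^0$.

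The missing idea, and the heart of the paper's proof, is to split $\tau (u_N)-u_N=\eta _N+pw$ with $\eta _N\in u_1^{a+1}O_N^+$ and $w\in W_N(\c K_{sep})$ arbitrary, and then to iterate $p\,\id _{\c G}$: since $p\,\id _{\c G}(X)=pX+X^p$, each application sends the ``bad'' term $p^jw_j$ to $p^{j+1}w_{j+1}$ while keeping the ``good'' term inside $u_1^{a+1}O_N^+$. After $N-1$ steps the bad term is $p^Nw_1=0$ in $W_N$, giving the honest congruence $\tau (u_1)\equiv u_1\ \op{mod}\ u_1^{a+1}O_N^+$ in all of $O_N^0$, not just modulo $p$. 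Both {\rm a)} and {\rm b)} are then immediate after one more application of $\sigma $. The threshold $v>(a+1)p^{N-1}-1$ is calibrated so that the Teichm\"uller lift of $\tau (\tilde u)-\tilde u$ lands in $u_1^{a+1}O_N^+$ (recall $u_1\equiv u_N^{p^{N-1}}\ \op{mod}\ p$), not, as you suggest, to compensate for a Frobenius twist across Witt coordinates.
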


\begin{proof}[Proof of Lemma]   
For $\tau\in \c I_{\c K}^{(v)}$, we have 
$\tau (u_N)=u_N+\eta _N +pw\,,$ 
where $\eta _N\in u_1^{a+1}O^+_N$ and $w\in W_{N}(\c K_{sep})$. 
For $1\leqslant i\leqslant N$, this implies  
$$\tau (u_i)=u_i+\eta _i+p^{N-i+1}w_i\,,$$
where $\eta _i\in u_1^{a+1}O_N^+$ and $w_i\in W_N(\c K_{sep})$. 
Therefore, 
$$\tau (u_1)\equiv u_1\,\op{mod}\,u_1^{a+1}O_N^+\,.$$ 
This implies part a) because  
$\tau (u_0)\equiv u_0\,\op{mod}\,u_0^{a+1}O_N^+$ 
and part b) because 
$\sigma (\tau t)/\sigma (t)\equiv 1\,\op{mod}\,u_0^aO^+_N$.     
\end{proof}

With the relation to the original problem of estimating the upper ramification numbers 
of the $\Gamma _K$-module $H$ notice now that 
$\c K=k((\tilde u))$ coincides with 
$\sigma ^{-N}\c K_0$, where $\c K_0$ is the field-of-norms 
of the $p$-cyclotomic extension $\wt{K}$ of $K$. Then for any 
$v\geqslant 0$, $\Gamma _K^{(v)}=\Gamma _K\cap \c I_{\c K}^{(v^*)}$, where 
$\varphi _{\wt{K}/K}(v^*)=v$. In particular, 
$v>N-1+a/(p-1)$ if and only if $v^*>a_N^*$. 

So, the proof of Fontaine's conjecture is reduced 
to the proof of the following theorem stated exclusively in terms of 
the field $\c K$ of characteristic $p$. 

\begin{Thm}\label{T2.3}
For any $v>a^*_N$, the group  
$\c I_{\c K}^{(v)}$ acts trivially on $\wt{U}_a(M)$. 
\end{Thm}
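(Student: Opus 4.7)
The plan is to work with the concrete description of $\wt U_a(M)$ from the end of the proof of Proposition \ref{P1.2}: each element is represented, modulo $u_0^a O_N^+$, by a row vector $\bar u=(\bar u_a,\dots,\bar u_0)$ with coordinates in $O_N(\c K_{sep})$ satisfying an Artin-Schreier-type equation $\sigma(\bar u)\,D=\bar u$, where $D$ is an invertible matrix assembled out of the Fontaine-Laffaille structure matrix $C$ over $W(k)$ and the denominators $\sigma(t)^{-i}=(p+u_0^{p-1})^{-i}$, $0\le i\le a$. Given $\tau\in\c I_{\c K}^{(v)}$ with $v>a^*_N$, I would apply the functorial ring endomorphism $W_N(\tau)$ of $W_N(\c K_{sep})$ to this equation and subtract, obtaining
\[
\bar v := \tau(\bar u)-\bar u \;=\; \sigma(\tau\bar u)\,(\tau D-D)+\sigma(\bar v)\,D\,.
\]

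The first and hardest step is to show that the perturbation $\tau(D)-D$ has every entry in $u_0^a O_N^+$. The key input is the estimate extracted in the proof of Lemma \ref{L2.2}, namely $\tau(u_N)=u_N+\eta_N+pw$ with $\eta_N\in u_1^{a+1}O_N^+$ and $w\in W_N(\c K_{sep})$. Taylor-expanding the action of $W_N(\tau)$ entrywise on $D$ in the variable $u_N$, converting the gain $u_1^{a+1}$ into the required gain $u_0^a$ via $u_0=u_1 t$, and using the constraint $a\le p-2$ to control the denominators $\sigma(t)^{-i}$, yields the congruence $\tau(D)\equiv D\pmod{u_0^a O_N^+}$. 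With this in hand the first summand above lies in $u_0^a O_N^+$, so
\[
\bar v \equiv \sigma(\bar v)\,D \pmod{u_0^a O_N^+}\,.
\]

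Next I would pin down the initial size of $\bar v$. Since $\tau$ fixes the residue field of $\c K_{sep}$, each Witt coordinate of $\tau(\bar u_i)$ differs from that of $\bar u_i$ by an element of $\m_{sep}$, so all coordinates of $\bar v$ lie in $W_N(\m_{sep})\cap O_N(\c K_{sep})=O_N^+$. Iterating the residual congruence $n$ times, and using that $\sigma$ preserves $u_0^a O_N^+$ (since $\sigma(u_0)=u_0\sigma(t)$), we obtain $\bar v \in \sigma^n(O_N^+)+u_0^a O_N^+$ for every $n\ge 0$. The vanishing $\bigcap_{n\ge 0}\sigma^n(O_N^+)=0$ recorded in Section \ref{S1} then forces $\bar v\in u_0^a O_N^+$, which is exactly the triviality of the $\c I_{\c K}^{(v)}$-action on $\wt U_a(M)$.

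The main obstacle is the entrywise bound $\tau D - D \in u_0^a O_N^+$: the matrix $D$ mixes algebraic factors from $C$ with the analytic denominators $\sigma(t)^{-i}$, and the perturbation $\tau(u_N)-u_N$ carries an unavoidable characteristic-zero contribution $pw$ with $w$ only known to lie in $W_N(\c K_{sep})$, not a priori in $O_N(\c K_{sep})$. Carefully propagating the geometric bound $\eta_N\in u_1^{a+1}O_N^+$ through the Laurent expansion of $D$ while absorbing this $pw$-piece, using that $p$ is nilpotent of order $N$ in $W_N$, is the technical crux, and this is precisely where the hypothesis $a\le p-2$ is used essentially, exactly as for the full faithfulness of the Fontaine-Laffaille functor $U_a$.
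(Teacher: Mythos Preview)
Your approach is entirely different from the paper's, and as written it has a genuine gap.

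\textbf{What the paper does.} The paper never attempts a direct estimate of $\tau(\bar u)-\bar u$. Instead it argues by contradiction using an auxiliary field. One supposes there is a maximal $v^*>a^*_N$ with $\c I_{\c K}^{(v^*)}$ acting non-trivially on $\wt U_a(M)$, then chooses a totally ramified extension $\c K'=\c K(N^*,r^*)$ of degree $q=p^{N^*}$ whose Herbrand function has a single edge at $(r^*,r^*)$ with $a^*_Nq/(q-1)<r^*<v^*$. Two comparisons are made: via the field isomorphism $\c K\simeq\c K'$ one transports $\wt U_a(M)$ to a module $\wt U_a^{\,\prime\,(q)}(M)$ and sees that $v^*$ is the maximal number with $\c I_{\c K'}^{(v^*)}$ acting non-trivially; via the congruence $u_1\equiv u_1^{\prime(q)}$ one embeds $\c S_a$ into $\c S_a^{\prime(q)}$ and sees that $v^*$ is also the maximal number with $\c I_{\c K}^{(v^*)}$ acting non-trivially on the \emph{same} module. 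But $\c I_{\c K'}^{(v^*)}=\c I_{\c K}^{(\varphi_{\c K'/\c K}(v^*))}$ with $\varphi_{\c K'/\c K}(v^*)<v^*$, a contradiction.

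\textbf{Where your argument breaks.} Two points are not under control. First, you assert that $\bar v=\tau(\bar u)-\bar u$ has coordinates in $O_N^+=O_N(\c K_{sep})\cap W_N(\m_{sep})$. This presupposes that $W_N(\tau)$ carries $O_N(\c K_{sep})$ into itself, which is exactly what the paper warns against in Section~\ref{S1}: a continuous automorphism $\tau$ of $\c K_{sep}$ cannot in general be lifted to $O_N(\c K_{sep})$, only to $W_N(\c K_{sep})$. So a priori $\bar v$ lives in $W_N(\m_{sep})$, not in $O_N^+$. Second, and more seriously, the iteration ``$\bar v\in\sigma^n(O_N^+)+u_0^aO_N^+$'' is not justified. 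The matrix $D=T^{-1}C^{-1}$ has entries involving $\sigma(t)^{-i}$, which lie in $O_N(\c K)$ but not in $O_N^0(\c K)$; multiplying $\sigma(\bar v)$ by $D$ therefore does not keep you inside $\sigma(O_N^+)$, and the inductive step fails as stated. Note also that the congruence $\bar v\equiv\sigma(\bar v)D\pmod{u_0^aO_N^+}$ is satisfied by \emph{every} representative of an element of $\wt U_a(M)$, including the nonzero ones, so the conclusion $\bar v\in u_0^aO_N^+$ must come from a genuine smallness of $\bar v$ that survives the poles of $D_n=\sigma^{n-1}(D)\cdots D$; you have not supplied such an estimate. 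Finally, the $pw$-contribution you flag as the ``technical crux'' is in fact not the obstacle for $\tau D-D$, since $D$ is built from $t$ and $u_0$ alone and Lemma~\ref{L2.2} already gives $\tau(u_1)\equiv u_1\pmod{u_1^{a+1}O_N^+}$ with no $pw$-term; the real difficulty is the pole bookkeeping above, which you have not carried out.
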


\section{Proof of Theorem \ref{T2.3}}
 
\subsection{Auxiliary field $\c K'$} \label{S3.1} 
Let $N^*\in\N $ and $r^*\in\Q $ be such that for $q:=p^{N^*}$, 
$r^*(q-1):=b^*\in\N $ and $v_p(b^*)=0$. 

Consider the field $\c K'=\c K(N^*,r^*)$ from \cite{Ab2}.  Remind that 
\medskip 

--- $[\c K':\c K]=q$; 
\medskip 

--- $\c K'=k((\tilde u'))$, where 
$\tilde u=\tilde u^{\prime q}E(\tilde u^{\prime\, b^*})^{-1}$ 
(here $E$ is the Artin-Hasse exponential); 
\medskip

--- the Herbrand function $\varphi _{\c K'/\c K}$ has only one edge point 
$(r^*,r^*)$. 
\medskip

For $\c K'$ and its above uniformiser $\tilde u'$ proceed as earlier to 
construct the  
lifts $O'_N(\c K')$ and $O'_N(\c K_{sep})$ obtained with respect to 
the $p$-basis $\tilde u'$. 
Introduce similarly the modules $O^{\,\prime\, 0}_N$, $O^{\,\prime\, +}_N$, the elements 
$u'_0, t'\in O_N(\c K')$ and the filtered module 
$\c S'_a$. 

\subsection{ }\label{S3.2}
Compare the old and the new lifts using their canonical 
embeddings into 
$W_N(\c K_{sep})$. 
Note that  $u_N$ is not generally an element of 
$O'_N(\c K')$ because the Teichmuller representative 
$u_N=[\tilde u]$ can't be written as a power series in $u'_N=[\tilde u']$ if $N>1$. 
However, we can easily see that for $1\leqslant i<N$, 
$u_{N-i}\in O'_N(\c K')\,\op{mod}\,p^{i+1}W_N(\c K')$. 
In particular, 
$u_1, u_0, t\in O'_N(\c K')$.

\begin{Prop} \label{P3.1}
If $\xi\in \wt{U}_a(M)$ then for any $m\in M$, $\xi (m)\in O'_N(\c K_{sep})$. 
\end{Prop}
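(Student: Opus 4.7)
\emph{Plan.} I would mimic the final portion of the proof of Proposition~\ref{P1.2}, systematically replacing $O_N(\c K_{sep})$ by $O'_N(\c K_{sep})$. Recall that upon choosing $W(k)$-bases of summands $N_i\subset M^i$ complementary to $M^{i+1}$, the $\varphi_i$-structure is encoded by an invertible matrix $C\in\op{GL}(W(k))$, and any $\xi\in\wt U_a(M)$ corresponds to a vector $\bar u=(\bar u_a,\dots,\bar u_0)$ with coordinates in $W_N(\c K_{sep})$, defined modulo $u_0^a W_N(\m_{sep})$, satisfying $\sigma(\bar u)\,D=\bar u$, where
\[
D=\op{diag}\bigl(\sigma(t^a)^{-1},\dots,\sigma(t)^{-1},1\bigr)\cdot C^{-1}.
\]
Once $\bar u$ is shown to admit representative entries in $O'_N(\c K_{sep})$, the proposition follows, since $\xi(m)$ is a $W_N(k)$-linear combination of the $\bar u_i$ for each $m\in M$.

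The main step is to verify that $D$ has entries in $O'_N(\c K')$, despite the fact that $O_N(\c K)\not\subset O'_N(\c K')$ in general. By Section~\ref{S3.2} we have $u_1,u_0,t\in O'_N(\c K')$; in particular $\sigma t=p+u_0^{p-1}\in O'_N(\c K')$. The relation $\tilde u=\tilde u'^{q}E(\tilde u'^{b^*})^{-1}$ gives $u_0\,\op{mod}\,p=\tilde u^{p^N}=\tilde u'^{qp^N}\cdot(\text{unit in }k[[\tilde u']])$, so $u_0^{-1}\in W_N(k)((u'_N))=O'_N(\c K')$. Expanding the geometric series
\[
(\sigma t)^{-1}=u_0^{-(p-1)}\sum_{n=0}^{N-1}\bigl(-p/u_0^{p-1}\bigr)^n
\]
(a finite sum modulo $p^N$) places $(\sigma t)^{-1}$ in $O'_N(\c K')$, and hence the entire matrix $D$ lies in $O'_N(\c K')\subset O'_N(\c K_{sep})$.

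With $D\in O'_N(\c K_{sep})$, the step-by-step descent in the proof of Proposition~\ref{P1.2} transfers verbatim. The coordinates of $\sigma^{N-1}\bar u$ lie in $\sigma^{N-1}W_N(\c K_{sep})=W_N(\sigma^{N-1}\c K_{sep})\subset O'_N(\c K_{sep})$, by the very definition $O'_N(\c K_{sep})=W_N(\sigma^{N-1}\c K_{sep})[u'_N]$. Since this ring is $\sigma$-stable, applying $\sigma^{k-1}$ to the identity $\sigma(\bar u)\,D=\bar u$ yields $\sigma^{k-1}(\bar u)=\sigma^{k}(\bar u)\cdot\sigma^{k-1}(D)\in O'_N(\c K_{sep})$ as soon as $\sigma^{k}(\bar u)$ lies there. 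Iterating from $k=N-1$ down to $k=0$ puts $\bar u$ itself into $O'_N(\c K_{sep})$, which proves the proposition. The sole real obstacle is the verification $(\sigma t)^{-1}\in O'_N(\c K')$; this is exactly what makes the change of uniformiser from $\tilde u$ to $\tilde u'$ harmless for the current argument.
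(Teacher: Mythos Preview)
Your argument is correct and follows the paper's own proof essentially verbatim: the paper also reduces to the relation $\sigma(\bar\xi)D=\bar\xi$, observes that $D$ has entries in $O'_N(\c K')$ (citing only that $t\in O'_N(\c K')$, whereas you spell out why $(\sigma t)^{-1}$ lies there), and then runs the identical descending induction from $\sigma^{N-1}\bar\xi\in\sigma^{N-1}O_N(\c K_{sep})\subset O'_N(\c K_{sep})$.
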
 

\begin{proof} Proceed as we proceeded at the 
end of Section \ref{S1}. Then the vectors 
$(\xi (\bar n_a), \dots ,\xi (\bar n_0))$ appear in the form 
$\bar \xi\,\op{mod}\,u_0^aO_N^+$, where $\bar \xi$ is a vector with 
coefficients in $O_N(\c K_{sep})$ such that  
\begin{equation}\label{E3.1} 
\sigma (\bar \xi )D=\bar \xi\,,
\end{equation}
and the matrix $D$ has coefficients in $O'_N(\c K')$ (use that $t\in O'_N(\c K')$). 
 We know that all coordinates 
of $\sigma ^{N-1}\bar\xi$ belong to $\sigma ^{N-1}O_N(\c K_{sep})
\subset O_N^{\prime }(\c K_{sep})$. Then \eqref{E3.1} implies step-by-step 
that the vectors 
$\sigma ^{N-2}\bar\xi , \dots ,\bar\xi $  
have coordinates in $O_N^{\prime }(\c K_{sep})$.  
 \end{proof}

\subsection{} \label{S3.3} 

Now suppose $v^*\in\Q $, 
$\c I_{\c K}^{(v)}$ acts trivially on $\wt{U}_a(M)$ for all $v>v^*$ and $v^*$ is 
the minimal with this property. The existence of $v^*$ follows from the 
left-continuity of the ramification filtration with respect to the upper numbering. 

If $v^*\leqslant a^*_N$ then our theorem is proved. 

Suppose that $v^*>a^*_N$. Choose the parameters $r^*$ and $N^*$ 
from Subsection \ref{S3.1} such that 
$a_N^*q/(q-1)<r^*<v^*$. 

For any $\alpha\in O'_N(\c K_{sep})$, set $\alpha ^{(q)}=\sigma ^{N^*}\alpha $.

\begin{Lem} \label{L3.2}
 $u_1/u_1^{\prime (q)}\equiv 1\,
\op{mod}\,u_1^{\prime\,(q)a}O_N^{\prime\,+}(\c K')$.   
\end{Lem}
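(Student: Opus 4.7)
The plan is to prove the lemma by choosing a convenient lift $\tilde v\in W_N(\c K')$ of $\tilde u$ and then applying the formula $u_1=(p^{N-1}\id_{\c G})(\tilde v)$, valid for any such lift (cf.\ the remark in Section~\ref{S1} that $u_0,u_1,t$ depend only on $\tilde u$). A natural candidate is
\[\tilde v=u_N^{\prime\,q}\cdot E(u_N^{\prime\,b^*})^{-1},\]
which lifts $\tilde u$ because in $R$ one has $\tilde u=\tilde u^{\prime\,q}E(\tilde u^{\prime\,b^*})^{-1}$ (defining relation of $\c K'$) and the Artin--Hasse exponential $E$ has $\Z_p$-integer coefficients. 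Setting $f=p^{N-1}\id_{\c G}=P^{\circ(N-1)}$ with $P(X)=pX+X^p$, this gives $u_1=f(\tilde v)$ and $u_1^{\prime\,(q)}=f(u_N^{\prime\,q})$.

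Next, I would exploit the formal group structure of $\c G$. Let $Z:=\tilde v-_{\c G}u_N^{\prime\,q}$; since $f$ is a homomorphism of $\c G$, $u_1=u_1^{\prime\,(q)}+_{\c G}f(Z)$, and writing $A+_{\c G}B=A+B+AB\cdot s(A,B)$ with $s\in\Z_p[[X,Y]]$ gives
\[u_1-u_1^{\prime\,(q)}=f(Z)\cdot\bigl(1+u_1^{\prime\,(q)}\,s(u_1^{\prime\,(q)},f(Z))\bigr),\]
the bracketed factor being a unit in $O_N^{\prime\,0}(\c K')$. The crucial observation is that $Z$ vanishes when $\delta:=E(u_N^{\prime\,b^*})^{-1}-1$ does, so $\delta\mid Z$; a first-order expansion in $\delta$ of $\c G(u_N^{\prime\,q}(1+\delta),\iota(u_N^{\prime\,q}))$ gives $Z\equiv u_N^{\prime\,q}\delta\cdot U\pmod{\delta^2}$ with $U$ a unit, whence $v_{u_N'}(Z)\geqslant q+b^*$.

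The final step is to bound $f(Z)$ inside $u_1^{\prime\,(q)(a+1)}O_N^{\prime\,+}(\c K')$. Using $P(X)=X(p+X^{p-1})$ one expands
\[f(Z)=Z\prod_{i=0}^{N-2}\bigl(p+P^{\circ i}(Z)^{p-1}\bigr)=\sum_{S\subset\{0,\ldots,N-2\}}p^{N-1-|S|}\,Z\prod_{i\in S}P^{\circ i}(Z)^{p-1}.\]
The principal term (with $S$ full) reduces mod $p$ to $Z^{p^{N-1}}$, of $u_N'$-adic order $\geqslant p^{N-1}(q+b^*)>(a+1)qp^{N-1}$, thanks to $b^*=r^*(q-1)>((a+1)p^{N-1}-1)q>ap^{N-1}q$; this places it inside the target ideal modulo $p$. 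The remaining terms ($|S|=k<N-1$) carry a factor $p^{N-1-k}$ and must be matched against the Newton-polygon decomposition of $u_1^{\prime\,(q)}=f(u_N^{\prime\,q})$, whose leading monomials at $p$-adic level $N-1-i$ are $u_N^{\prime\,qp^i}$, so that $u_1^{\prime\,(q)(a+1)}$ lies in the ideal generated by $p^{(N-1-i)(a+1)}u_N^{\prime\,qp^i(a+1)}$ for $0\leqslant i\leqslant N-1$. The main obstacle is this last bookkeeping: since $W_N(k)[[u_N']]$ is not a valuation ring, the required $(p,u_N')$-bi-valuation matching is delicate and relies essentially on the Lubin--Tate structure of $f$.
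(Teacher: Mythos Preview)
Your setup through the estimate $Z\in u_N^{\prime\,q+b^*}O_N^{\prime\,0}(\c K')$ is correct and actually slightly cleaner than the paper's (working with $\tilde v\in O_N'(\c K')$ avoids the nuisance that $u_N\notin O_N'(\c K')$). But the route you choose for the endgame --- expanding $f(Z)=Z\prod_i(p+P^{\circ i}(Z)^{p-1})$ and trying to match each summand against a Newton-polygon description of the ideal $u_1^{\prime(q)(a+1)}O_N^{\prime\,0}(\c K')$ --- is genuinely harder than necessary, and you have not completed it. The difficulty you flag is real: $W_N(k)[[u_N']]$ is not a valuation ring, and the principal ideal generated by $u_1^{\prime(q)(a+1)}$ does not decompose nicely level by level in $p$.

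The paper sidesteps this entirely by an induction identical to that in Lemma~\ref{L2.2}. From $b^*+q>q(a+1)p^{N-1}$ one has, with $J:=u_1^{\prime(q)(a+1)}O_N^{\prime\,+}(\c K')$,
\[
u_N\equiv u_N^{\prime(q)}\ \ \op{mod}\ \bigl(J+p\,W_N(\c K')\bigr)
\]
(only the reduction mod $p$ is needed, and there $J$ becomes $\tilde u^{\prime\,q(a+1)p^{N-1}}\m_{\c K'}$). Now iterate $P(X)=pX+X^p$: if $A\equiv B\ \op{mod}\ (J+p^j)$, write $A-B=\eta+p^jw$ with $\eta\in J$; then $p(A-B)\in J+p^{j+1}$, and $A^p-B^p\equiv\eta^p\ \op{mod}\ (J+p^{j+1})$ since all binomial cross terms carry a factor $p$ and $p^{pj}\in p^{j+1}W_N$. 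Hence $P(A)\equiv P(B)\ \op{mod}\ (J+p^{j+1})$. After $N-1$ steps the error term is $p^N=0$ in $W_N$, giving $u_1\equiv u_1^{\prime(q)}\ \op{mod}\ J$, which is the lemma.

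So the idea you are missing is exactly this: let the $p$-adic error be absorbed one power at a time by iterating $P$, rather than confronting the full $(p,u_N')$-structure of $J$ at once. Your estimate on $Z$ already gives the base case $Z\in J+pO_N^{\prime\,0}(\c K')$; running the same induction on $P^{\circ i}(Z)$ finishes your argument with no further bookkeeping.
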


\begin{proof}
Consider $b^*=r^*(q-1)\in\N $ from Subsection \ref{S3.1}. Then 
$b^*+q>q(a^*_N+1)=q(a+1)p^{N-1}$  
and 
$$u_N\equiv u^{\prime (q)}_N\,\op{mod} \left (u_1^{\,\prime\,(q)\,{a+1}}
O^+_N(\c K')+pO_N(\c K')\right )$$
This implies  
$u_1\equiv u_1^{\,\prime\,(q)}\,\op{mod}\,
u^{\,\prime\,(q)\,a+1}_1O^{\prime\,+}_N(\c K')$  
and the lemma is proved. 
\end{proof}

\begin{Cor} \label{C3.3} 
 {\rm a)} $u_0/u_0^{\,\prime\,(q)}$ is invertible in 
$O_N^{\,\prime\,0}(\c K')$;
\medskip 

{\rm b)}  $\sigma (t/t^{\,\prime\,(q)})\equiv 1
\,\op{mod}\,u_0^{\,\prime\,(q)\,a}O_N^{\,\prime\,+}(\c K')$. 
\end{Cor}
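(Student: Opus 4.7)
The plan is to propagate Lemma~\ref{L3.2} through the Frobenius $\sigma$. I will rewrite the lemma in the multiplicative form $u_1 = u_1^{\prime(q)}(1+\delta)$ with $\delta \in u_1^{\prime(q)a}O_N^{\prime\,+}(\c K')$, after which both parts of the corollary become short manipulations with units of the form $1 + (\text{element of the maximal ideal})$.

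For part (a), I will apply $\sigma$ to this factorisation. Because $\sigma$ commutes with $\sigma^{N^*}$ one has $\sigma(u_1^{\prime(q)})=u_0^{\prime(q)}$, and combined with $\sigma u_1=u_0$ this gives $u_0=u_0^{\prime(q)}(1+\sigma\delta)$. The inclusion $\sigma(O_N^{\prime\,+})\subset O_N^{\prime\,+}$ shows that $1+\sigma\delta$ lies in $1+O_N^{\prime\,+}(\c K')$ and is therefore a unit in the local ring $O_N^{\prime\,0}(\c K')$, giving the stated invertibility of $u_0/u_0^{\prime(q)}$.

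For part (b), I will exploit $t=u_0/u_1$ and $t^{\prime(q)}=u_0^{\prime(q)}/u_1^{\prime(q)}$ to obtain $t/t^{\prime(q)}=(1+\sigma\delta)/(1+\delta)$, and then apply $\sigma$ once more to get $\sigma(t/t^{\prime(q)})=(1+\sigma^2\delta)/(1+\sigma\delta)$. The claim then reduces to checking that $\sigma\delta$ and $\sigma^2\delta$ both lie in $u_0^{\prime(q)a}O_N^{\prime\,+}(\c K')$. The first membership is immediate from $\sigma u_1^{\prime(q)}=u_0^{\prime(q)}$ together with $\sigma(O_N^{\prime\,+})\subset O_N^{\prime\,+}$. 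For the second I will use the identity $\sigma u_0^{\prime(q)}=u_0^{\prime(q)}\cdot\sigma t^{\prime(q)}$ and the fact that $\sigma t^{\prime(q)}=p+u_0^{\prime(q)(p-1)}$ belongs to $O_N^{\prime\,0}(\c K')$, which together show that $\sigma$ stabilises the principal ideal $u_0^{\prime(q)a}O_N^{\prime\,+}(\c K')$. Dividing $\sigma^2\delta-\sigma\delta\in u_0^{\prime(q)a}O_N^{\prime\,+}(\c K')$ by the unit $1+\sigma\delta$ then delivers the desired congruence.

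I do not anticipate a serious obstacle. The only step that is not purely formal is the $\sigma$-stability of $u_0^{\prime(q)a}O_N^{\prime\,+}(\c K')$, and this reduces to recognising $\sigma t^{\prime(q)}$ as an element of $O_N^{\prime\,0}(\c K')$. Everything else is standard algebra of units of the form $1+(\text{small})$ in a complete local ring, and the full argument should occupy only a few lines beyond the bookkeeping described above.
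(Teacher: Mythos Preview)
Your argument is correct and is precisely the deduction the paper intends: Corollary~\ref{C3.3} is stated without proof because it follows from Lemma~\ref{L3.2} by the same $\sigma$-propagation used in the proof of Lemma~\ref{L1.1} (apply $\sigma$ to pass from $u_1$ to $u_0$, form $t/t^{\prime(q)}$, and apply $\sigma$ again). Your verification that $\sigma$ stabilises $u_0^{\prime(q)a}O_N^{\prime\,+}(\c K')$ via $\sigma u_0^{\prime(q)}=u_0^{\prime(q)}\sigma t^{\prime(q)}$ with $\sigma t^{\prime(q)}=p+u_0^{\prime(q)(p-1)}\in O_N^{\prime\,0}(\c K')$ is exactly the missing bookkeeping.
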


\subsection{$\c I_{\c K'}^{(v^*)}$-action}  \label{S3.4} 
Introduce the filtered module $\c S^{\,\prime\,(q)}_a$ as follows. 
\medskip 

--- $\c S^{\,\prime\,(q)}_a=O_N^{\,\prime\,0}/u_0^{\,\prime\,(q)\,a}O_N^{\,\prime\,+}$; 
\medskip 

--- for $0\leqslant i\leqslant a$, 
$\op{Fil}^i\c S_a^{\,\prime\,(q)}=t^{\,\prime\,(q)\,i}\c S_a^{\,\prime\,(q)}$;
\medskip 

--- $\varphi _i^{\,\prime\,(q)}:\op{Fil}^i\c S_a^{\,\prime\,(q)}\To\c S^{\,\prime\,(q)}_a$ 
is $\sigma $-linear such that $\varphi _i^{\,\prime\,(q)}(t^{\,\prime\,(q)\,i})=1$.  
\medskip 

Suppose $M'\in\op{MF}_N(a)$ is given similarly to $M$ by the relation 
$$(\varphi _a(\bar n_a), \dots ,\varphi _0(\bar n_0))=
(\bar n_a,\dots ,\bar n_0)\sigma ^{-N^*}C$$ 
Then we can use $\sigma ^{N^*}$ to identify 
the modules $\wt{U}_a'(M'):=\op{Hom}_{\c M\c F}(M',\c S_a')$ and 
$\wt{U}_a^{\,\prime\,(q)}(M):=\op{Hom}_{\c M\c F}(M,\c S_a^{\,\prime\,(q)})$. 
This identification is compatible with the action of the subgroups 
$\c I_{\c K'}^{(v)}$, where $v>a_N^*$. 

Note that the fields $\c K$ and $\c K'$ are isomorphic (as any 
two fields of formal power series 
with the same residue field). Choose an isomorphism $\kappa :\c K\To \c K'$ such 
that $\kappa (\tilde u)=\tilde u'$ and $\kappa |_{k}=\sigma ^{-N^*}$. 
We can extend $\kappa $ to an 
isomorphism of separable closures of $\c K$ and $\c K'$. This allows us to identify 
the groups $\c I_{\c K}$ and $\c I_{\c K'}$ and this identification is compatible 
with the appropriate ramification filtrations. Even more, we obtain an  
identification of $\wt{U}_a(M)$ with $\wt{U}_a'(M')$ and this identification respects the action 
of $\c I_{\c K}^{(v)}$ on $\wt{U}_a(M)$ and the action of 
$\c I_{\c K'}^{(v)}$ on $\wt{U}_a'(M')$ 
for any $v>a^*_N$. Therefore, $v^*$ is the maximal number such that 
$\c I_{\c K'}^{(v^*)}$ acts non-trivially on $\wt{U}_a'(M')$ and 
\medskip 

$\bullet $\ $v^*$ {\it is the maximal such that 
$\c I_{\c K'}^{(v^*)}$ acts non-trivially on $\wt{U}_a^{\,\prime\,(q)}(M)$}.

\subsection {$\c I_{\c K}^{(v^*)}$-action} \label{S3.5} 

Introduce the filtered module ${\c S}^\star _a$ as follows:
\medskip 

--- ${\c S}^{\star }_a=O_N^0\cap O'_N(\c K_{sep})/u_0^aO^+_N\cap O'_N(\c K_{sep})$; 
\medskip 

--- $\op{Fil}^i{\c S}^{\star }_a=t^i\c S_a\cap {\c S}^{\star }_a$; 
\medskip 

--- ${\varphi}^{\star } _i=\varphi _i|_{\op{Fil}^i{\c S}^{\star }_a}
:\op{Fil}^i{\c S}^{\star }_a\To {\c S}^{\star }_a$.  
\medskip 

The results from Subsection \ref{S3.2} 
allow us to identify $\wt{U}_a(M)$ with 
${U}^{\star }_a(M)=\op{Hom}_{\c M\c F}(M,{\c S}^{\star }_a)$. By 
the results from Subsection \ref{S3.3},  
there is a natural 
embedding of filtered modules ${\c S}^{\star }_a\To\c S_a^{\,\prime\,(q)}$ and, 
therefore, we can identify $\wt{U}_a(M)$ with $\wt{U}_a^{\,\prime\,(q)}(M')$. 
This identification is compatible with the action of ramification 
subgroups $\c I_{\c K}^{(v)}$ for all $v>a^*_N$. So, 
\medskip 

$\bullet $\  $v^*$ {\it is the maximal such that 
$\c I_{\c K}^{(v^*)}$ acts non-trivially on $\wt{U}_a^{\,\prime\,(q)}(M)$}.

\subsection{The end of proof of Theorem} \label{S2.5}
It remains to notice that $\c I_{\c K'}^{(v^*)}=\c I_{\c K}^{(v^*_0)}$, 
where $v^*_0=\varphi _{\c K'/\c K}(v^*)<v^*$. 

The contradiction.

\end{document}